\documentclass[12pt]{article}
\usepackage{amsmath,amssymb,amsthm}
\theoremstyle{remark}
\newtheorem{proposition}{Proposition}[section]
\newtheorem{remark}{Remark}[section]

\theoremstyle{remark}

\numberwithin{equation}{section}
\usepackage{mdframed}
\usepackage{graphicx}
\usepackage{hyperref}

\usepackage{float}
\usepackage{subcaption}
\newtheorem{definition}{Definition}[section]

\title{Tube Integrability in a Time-Dependent Nonlinear Oscillator}

\author{Johannes Hagel \\[0.3em]
	 Alexander-von-Humboldt-Gymnasium Neuss \\	 
	 Bergheimer Straße 233, 41464 Neuss, Germany \\
	 \small johannes.hagel@gmail.com \\
	 \vspace{-0.5cm}
	 \date{1 November 2025}}

\begin{document}

\maketitle
\vspace{-1cm}
\begin{abstract}
	We study the nonlinear oscillator $z'' + \omega^2 z + g(t) z^2 = 0$ with a 
	time–dependent coefficient $g(t)$. We show that this equation admits an exact 
	quadratic invariant $I(z,p,t)$ provided that $g(t)=\alpha_2(t)^{-5/2}$ and 
	$\alpha_2(t)$ satisfies a nonlinear third–order differential equation. The 
	resulting invariant constrains the dynamics to a smooth two–dimensional 
	surface in the extended phase space $(z,p,t)$. If $\alpha_2(t)$ is periodic, 
	this surface forms a compact invariant torus. However, we prove that periodic 
	solutions of $\alpha_2(t)$ are generically obstructed by a resonance mechanism, 
	leading instead to an aperiodic but non–chaotic evolution. In this regime the 
	invariant surface is non–compact and extends along the time direction, forming 
	a \emph{tube} rather than a torus. We therefore propose the term 
	\emph{tube integrability} for integrable systems whose invariant manifolds 
	are non–compact in time. A perturbation expansion for $\alpha_2(t)$ up to 
	third order is derived and compared with numerical integration, clarifying 
	the parameter regimes in which the truncated series provides quantitatively 
	accurate approximations. The breakdown of the series for small $y_0$ is shown 
	to reflect the asymptotic nature of the expansion rather than a loss of 
	integrability.
\end{abstract}

\vspace{0.5cm}
\noindent

	\textbf{Keywords:} 
	nonlinear oscillators; time-dependent Hamiltonian systems; 
	tube integrability; Ermakov--Lewis invariant;  
	non-compact invariant manifolds; asymptotic expansions; resonance obstruction.

\section{Introduction}

Classical Liouville integrability is commonly associated with the geometry of 
invariant tori in phase space~\cite{Arnold}. In this classical setting, compactness 
is essential: it permits the introduction of globally defined angle variables and 
ensures recurrent motion. When a system is explicitly time-dependent, however, 
the connection between integrability and compact invariant geometry no longer follows.

In this work we study the nonlinear oscillator
\begin{equation}
	z'' + \omega^2 z + g(t)z^2 = 0 \label{eq:z_basic}
\end{equation}
where $g(t)$ varies smoothly in time. We show that this system admits an exact 
quadratic invariant $I(z,p,t)$, structurally related to the Ermakov--Lewis framework 
for time-dependent oscillators~\cite{Lewis1967,Ermakov1880,Pinney1950}. The existence 
of this invariant confines solutions to a smooth two-dimensional surface in the 
extended phase space $(z,p,t)$.

If $\alpha_2(t)$ is periodic, Floquet theory implies that these invariant surfaces 
form compact tori~\cite{Floquet1883,Hill1886}. However, we prove that periodic 
solutions of $\alpha_2(t)$ are generically obstructed by a second-order resonance 
mechanism, analogous to small-divisor effects in classical perturbation theory 
and averaging methods~\cite{Lindstedt,SandersVerhulst}. As a consequence, the 
generic behavior of $\alpha_2(t)$ is aperiodic.

In this non-periodic regime the invariant surface does not close on itself, but 
forms a smooth, non-compact structure extending along the time axis. We therefore 
propose the term \emph{tube integrability} to describe integrable systems whose 
invariant manifolds are non-compact in time. 

The perturbation series for $\alpha_2(t)$ is shown to be asymptotic rather than 
convergent, with the eventual breakdown of truncations governed by nearby complex 
singularities in the continuation parameter~\cite{BenderOrszag,Olver,Berry1989}. 
This breakdown does not indicate a loss of integrability, but marks the natural 
limit of the truncated series.

\medskip
\noindent\textbf{Remark.}
The conceptual motivation for this work originates in the long-standing question
of whether integrability is intrinsically tied to compact invariant geometry.
The results presented here demonstrate that this need not be the case: exact
integrability may coexist with non-compact invariant manifolds, provided the
geometric structure is coherent and the invariant is global.

\section{General formulation and derivation of the invariant}

We start from the quadratic ansatz for a first integral
\begin{equation}
I(z,p,t) = a_0(z,t) + a_1(z,t) p + a_2(t) p^2,
\qquad p = z',
\end{equation}
which was derived in ~\cite{hagel-bouquet-1992} by requiring $\tfrac{dI}{dt}=0$ for all trajectories of
\begin{equation}
z'' + \omega^2 z + g(t) z^2 = 0. \label{osc_non_lin}
\end{equation}

As shown previously, the general solution for the coefficient functions is
\begin{align}
a_2(t) &= \alpha_2(t), \\
a_1(z,t) &= -\alpha_2'(t) z + \alpha_1(t), \\
a_0(z,t) &= \omega^2 \alpha_2(t) z^2 + \tfrac{2}{3} \alpha_2(t) g(t) z^3 - \alpha_1'(t) z + \tfrac{1}{2} \alpha_2''(t) z^2 + \alpha_0(t).
\end{align}

Inserting these expressions into the determining equations yields the following coupled system:
\begin{align}
\alpha_1'' + \omega^2 \alpha_1 &= 0, \tag{2.1} \\
\alpha_2''' + 4 \omega^2 \alpha_2' - 2 \alpha_1 g(t) &= 0, \tag{2.2} \\
5 g(t) \alpha_2' + 2 \alpha_2 g'(t) &= 0, \tag{2.3} \\
g(t) &= \alpha_2(t)^{-5/2}. \tag{2.4}
\end{align}

Equation (2.1) admits the oscillatory solution
\begin{equation}
\alpha_1(t) = \tfrac{1}{2} C_1 \cos(\omega t) + \tfrac{1}{2} C_2 \sin(\omega t), \tag{2.5}
\end{equation}
so that substitution into (2.2) yields the nonlinear third-order ODE
\begin{equation}
\alpha_2''' + 4\omega^2 \alpha_2' - \big[ C_1 \cos(\omega t) + C_2 \sin(\omega t)\big] \alpha_2^{-5/2} = 0. \tag{2.6}
\end{equation}

Taken together, (2.1)--(2.4) form the set of conditions for integrability in the quadratic case with nonzero $\alpha_1(t)$.

Finally, the quadratic invariant takes the explicit form
\begin{align}
I(z,p,t) &= \alpha_2(t) p^2 - \alpha_2'(t) z p + \alpha_1(t) p \nonumber \\
&\quad + \Big( \omega^2 \alpha_2(t) + \tfrac{1}{2} \alpha_2''(t) \Big) z^2
 - \alpha_1'(t) z + \tfrac{2}{3} \alpha_2(t) g(t) z^3 \;=\; K. \tag{2.7}
\end{align}

\bigskip
\newpage
\noindent\textbf{Summary of the integrable system:}
\begin{mdframed}[linewidth=1pt]
\begin{align*}
(S1)\;& z'' + \omega^2 z + g(t) z^2 = 0 \\[0.5em]
(S2)\;& g(t) = \alpha_2(t)^{-5/2} \\[0.5em]
(S3)\;& \alpha_2''' + 4\omega^2 \alpha_2' - [C_1\cos(\omega t)+C_2\sin(\omega t)]\,\alpha_2^{-5/2} = 0 \\[0.5em]
(S4)\;& I(z,p,t) = \alpha_2(t)p^2 - \alpha_2'(t)zp + \alpha_1(t)p \\
     &\quad + \Big(\omega^2\alpha_2(t)+\tfrac{1}{2}\alpha_2''(t)\Big)z^2
       - \alpha_1'(t) z + \tfrac{2}{3}\alpha_2(t) g(t) z^3 \;=\; K
\end{align*}
\end{mdframed}
\bigskip

\section{Tube integrability}
From Eqs. (S3) and (S4) we see, that  periodic solutions of $\alpha_2(t)$ lead to classical invariant tori 
in the extended phase space while non-periodic solutions require a different 
concept. In this general case we introduce the notion of \emph{tube integrability} 
which extends the classical concept of Liouville integrability to certain non-periodic situations.

\begin{definition}[Tube integrability]
	We denote a system
	\[
	z'' + \omega^2 z + g(t)\,z^2 = 0
	\]
	as \emph{tube integrable} if there exists a polynomial invariant
	\[
	I(z,p,t) = \sum_{k=0}^N a_k(t)\,p^k,
	\qquad p=z',
	\]
	with smooth time-dependent coefficients $a_k(t)$, such that
	\(\tfrac{d}{dt} I(z(t),p(t),t) \equiv 0\),
	even though the coefficient functions $a_k(t)$ are not periodic in~$t$.
	
	In this case, the flow $(z(t),p(t),t)$ is restricted to a smooth two-dimensional invariant surface in the extended phase space $(z,p,t)$. 
	Since $a_k(t)$ are not periodic, a Poincaré section at times $t=nT$ is formally admissible but does not produce closed invariant curves in the $(z,p)$-plane. Instead, the motion is confined to an \emph{invariant tube}, a non-periodic deformation of a classical invariant torus.
\begin{figure}[H]
	\centering
	\includegraphics[width=0.7\textwidth]{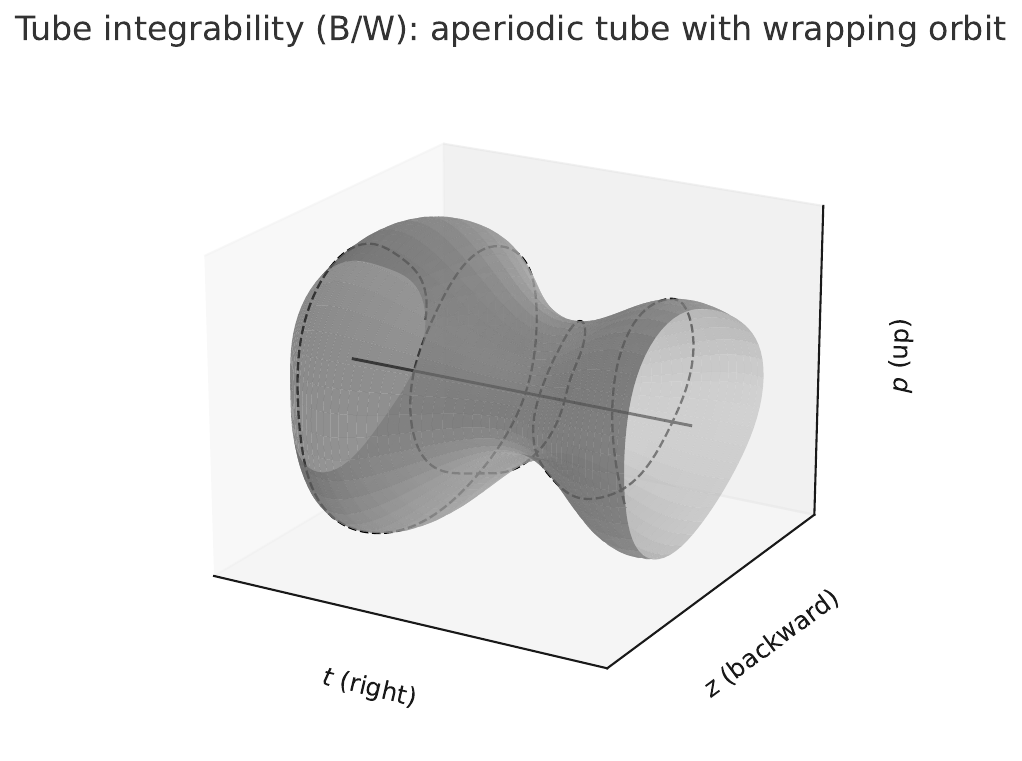}
	\caption{Schematic illustration of tube integrability. 
		The $t$–axis (right) runs inside the tube, while the cross–section
		$(z,p)$ varies aperiodically so that no invariant torus arises. 
		Instead, the solution winds around the $t$–axis on a non-periodic
		tube surface.}
	\label{fig:tube-schematic}
\end{figure}

\noindent Note that tube integrability is a property of the original $z$--equation (\ref{eq:z_basic}). The auxiliary function $y(\tau)=\alpha_2(t)$ appears only through the time-dependent coefficient $g(t)=y^{-5/2}$ which determines the geometry of the invariant surface. Thus, periodic $y$ would imply torus integrability of the $z$--equation, while aperiodic $y$ yields tube integrability of the $z$--equation.
\end{definition}
\begin{remark}
	Classical Liouville integrability with periodic coefficients $a_k(t)$
	leads to invariant two-dimensional tori in the extended phase space
	and to closed invariant curves in Poincaré sections.
	In contrast, tube integrability describes the situation where the
	invariant surface is topologically a ``tube'' rather than a torus:
	the trajectory winds quasiperiodically or even irregularly along the
	time direction, without closing. 
	From the point of view of $(z,p)$ only, the motion therefore appears
	irregular and does not admit classical Poincaré invariant curves,
	even though an exact invariant $I(z,p,t)$ still exists.

\noindent Consequently, the lack of periodicity in $\alpha_2(t)$ is directly transferred to the $z$--equation: no invariant tori arise, but the invariant surfaces are of tube type.
\end{remark}
As an explicit illustration of tube integrability, we present in
Fig.~\ref{fig:tube-integrability} a numerical example with non\-periodic
$\alpha_2(t)$. The four panels display $\alpha_2(t)$, the corresponding
coefficient $g(t)=\alpha_2(t)^{-5/2}$, the resulting solution $z(t)$,
and the quadratic invariant $I(t)$.
	\begin{figure}[H]
		\centering
	\begin{subfigure}[t]{0.48\linewidth}
		\includegraphics[width=\linewidth]{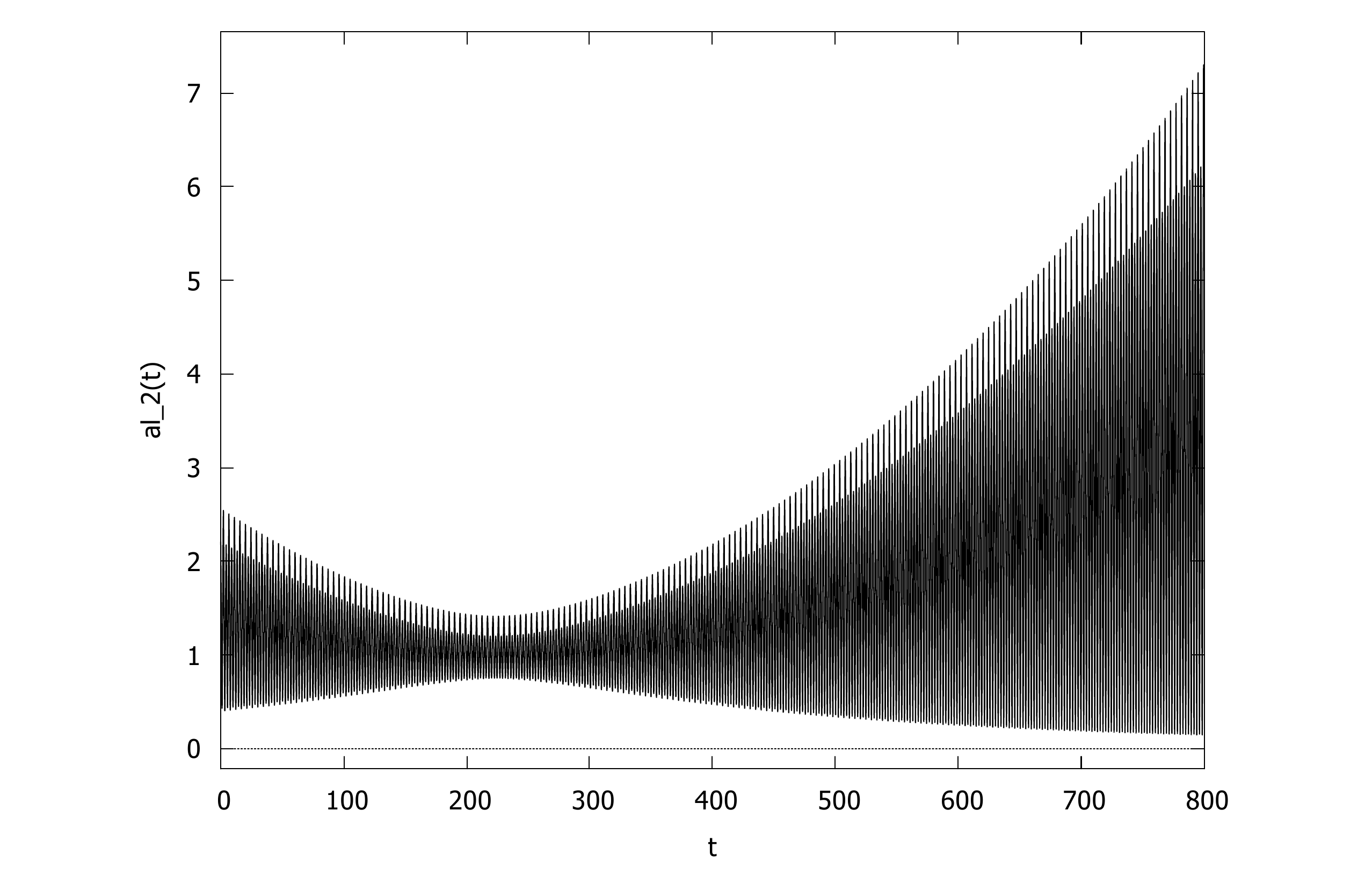} 
		\caption{$\alpha_2(t)$}
	\end{subfigure}
	\hfill
	\begin{subfigure}[t]{0.48\linewidth}
		\includegraphics[width=\linewidth]{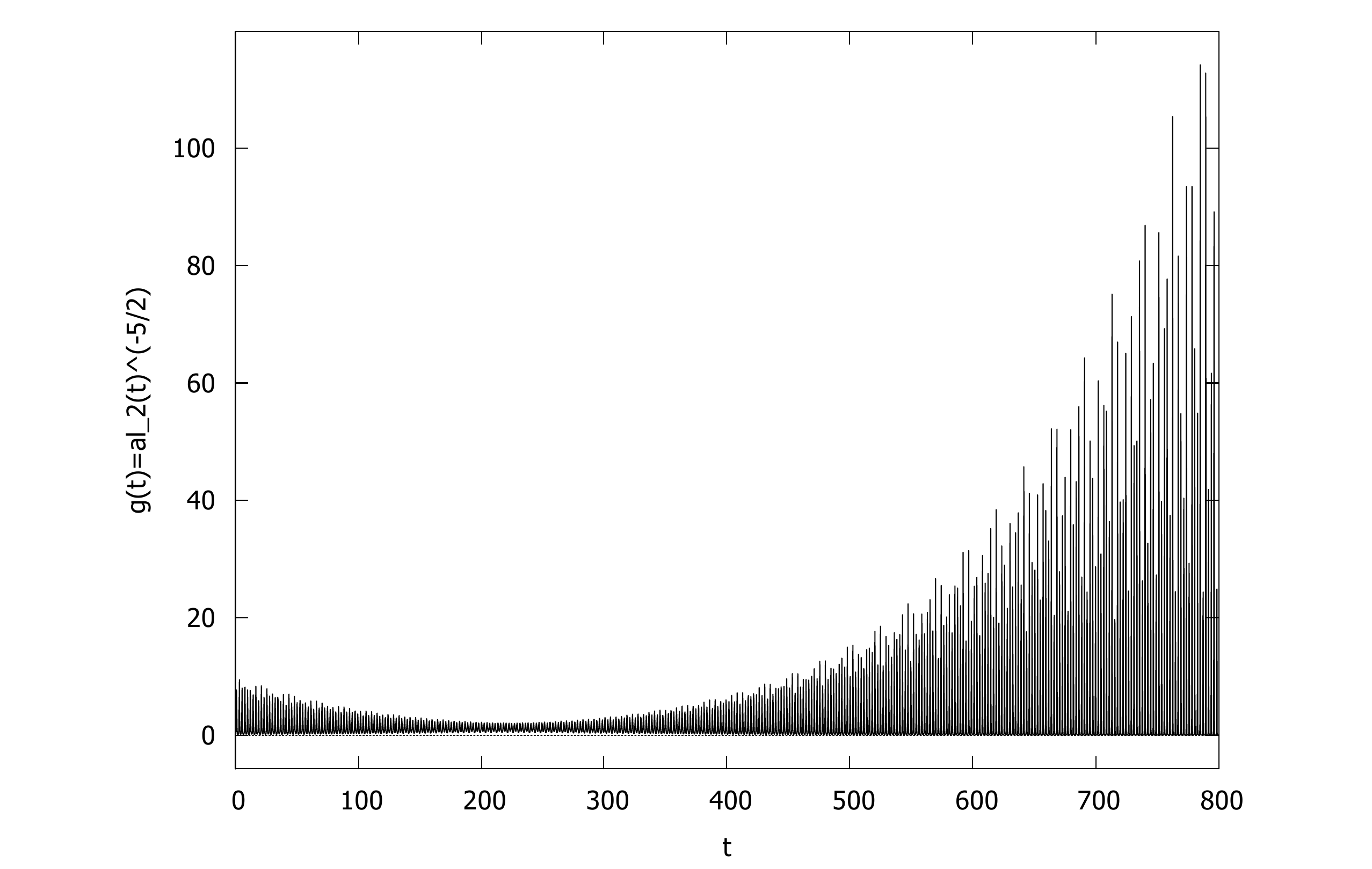} 
		\caption{$g(t)=\alpha_2(t)^{-5/2}$}
	\end{subfigure}
	
	\vspace{0.5cm}
	
	\begin{subfigure}[t]{0.48\linewidth}
		\includegraphics[width=\linewidth]{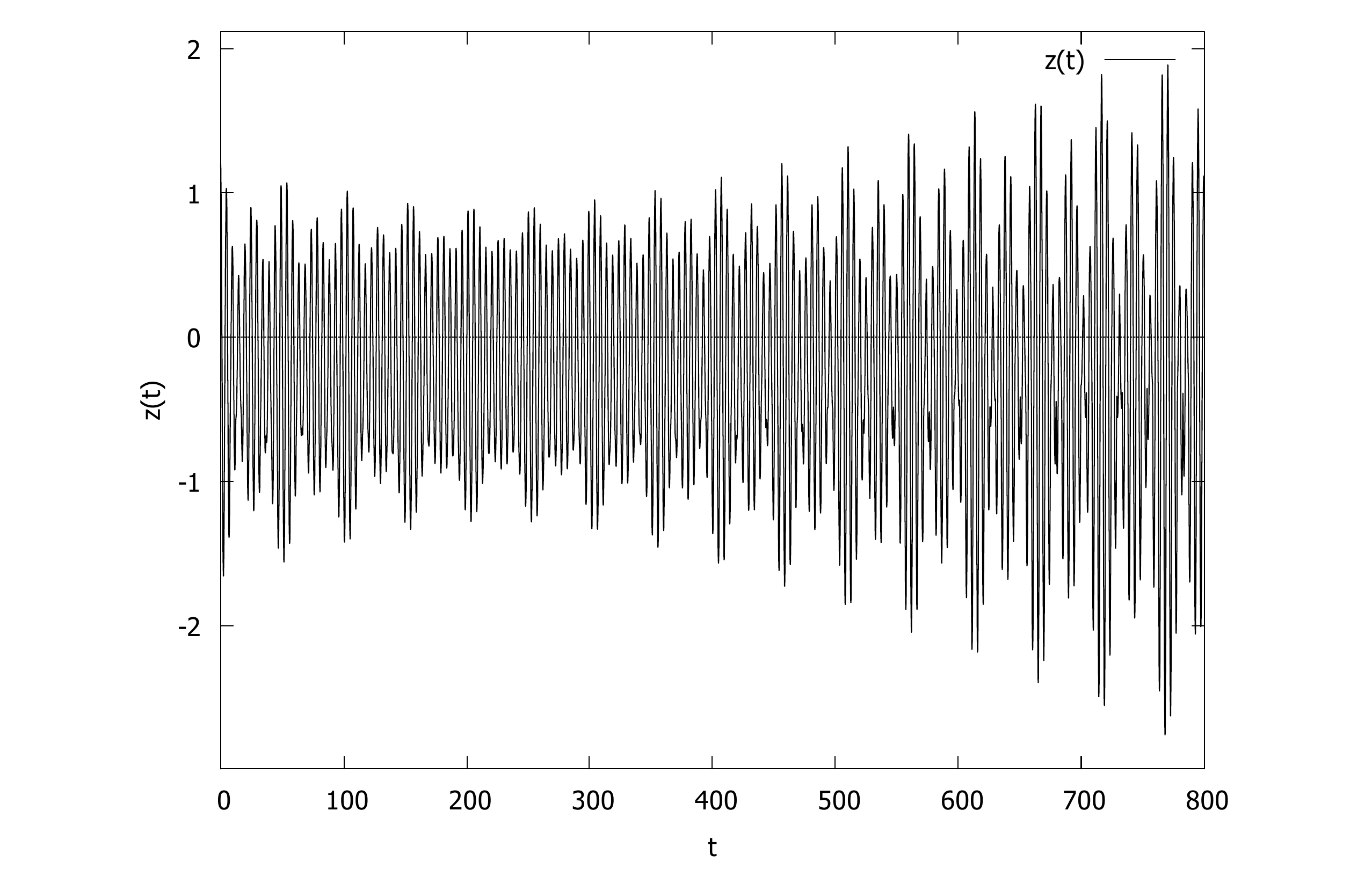} 
		\caption{$z(t)$}
	\end{subfigure}
	\hfill
	\begin{subfigure}[t]{0.48\linewidth}
		\includegraphics[width=\linewidth]{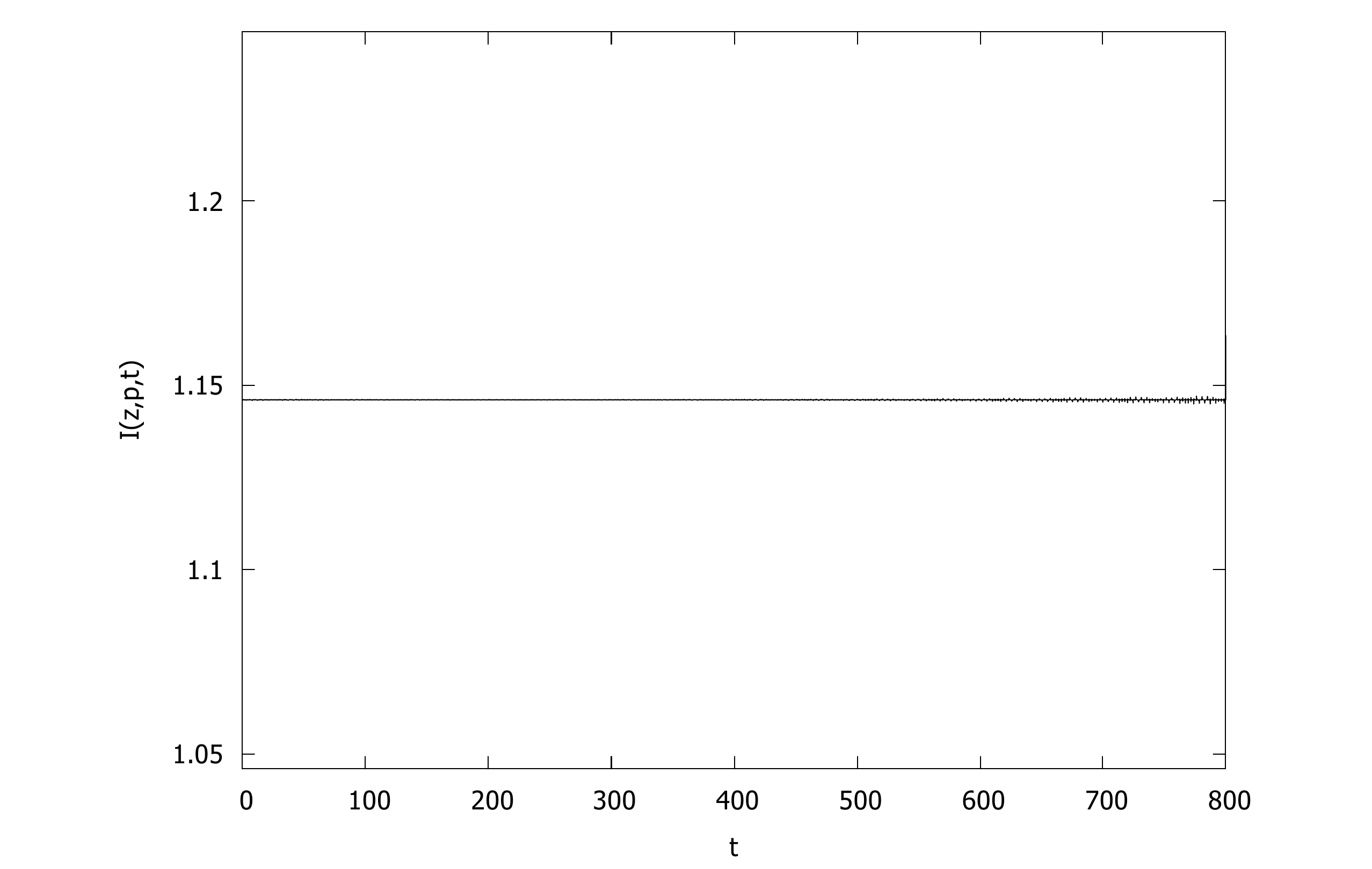} 
		\caption{Quadratic invariant $I(t)$}
	\end{subfigure}
	
	\caption{Illustration of \emph{tube integrability} in the case of a nonperiodic
		$\alpha_2(t)$. The function $\alpha_2(t)$ shows dominant frequencies $\omega$ and $2\omega$
		together with a slowly modulated envelope. The coefficient $g(t)$ injects energy,
		leading to irregular but bounded oscillations of $z(t)$. Nevertheless the quadratic
		invariant $I(t)$ remains conserved up to numerical accuracy, confirming motion on
		an invariant tube.}
	\label{fig:tube-integrability}

\end{figure}
\subsection{Example: tube integrability via the Ermakov–Pinney construction}
As an illustrating example of tube integrability in the sense of the above definition, we
consider the linear oscillator
\begin{equation}\label{eq:lin-osc}
	z'' + f(t)\,z = 0,
\end{equation}
and let $w(t)$ be any strictly positive solution of the Ermakov–Pinney equation
\begin{equation}\label{eq:EP}
	w'' + f(t)\,w = \frac{1}{w^3}.
\end{equation}
Then the Lewis–Ermakov invariant
\begin{equation}\label{eq:Lewis}
	I(z,p,t) \;=\; \tfrac12\Big(\,\big(\tfrac{z}{w}\big)^2 \;+\; \big(w\,p - w'\,z\big)^2\,\Big),
	\qquad p=z',
\end{equation}
satisfies $\tfrac{d}{dt}I(z(t),p(t),t)\equiv 0$ along all solutions of \eqref{eq:lin-osc}, for \emph{any} given coefficient $f(t)$.

\begin{proof}[Sketch]
	A direct differentiation of \eqref{eq:Lewis} and substitution of $z''=-f(t)z$ and $w''=-f(t)w+1/w^3$ yields $\dot I\equiv0$.
\end{proof}

\paragraph{Tube (vs. torus) behavior.}
If $f(t)$ is $T$–periodic, the coefficients in \eqref{eq:Lewis} are $T$–periodic and classical Liouville integrability manifests in invariant tori (closed invariant curves in Poincaré sections).
If, however, $f(t)$ is nonperiodic—e.g.\ obtained from a chaotic driver—then $I$ still defines an exact invariant surface in the extended space $(z,p,t)$, but the surface winds nonperiodically in $t$, hence an \emph{invariant tube} rather than a torus; classical Poincaré sections in $(z,p)$ do not yield closed invariant curves.

\paragraph{Chaotic forcing example (logistic driver).}
Let $\{\ell_n\}$ be the logistic map at $\mu=4$, $\ell_{n+1}=4\ell_n(1-\ell_n)$ on $[0,1]$.
Choose a sampling period $T_s>0$ and define a smooth $C^2$ interpolation $\tilde f(t)$ such that
$\tilde f(nT_s)=f_0+\Delta f\,(2\ell_n-1)$ for parameters $f_0>0$, $\Delta f\in(0,f_0)$.
(For instance, use cubic splines on the grid $t=nT_s$.)
Set $f(t)=\tilde f(t)$ in \eqref{eq:lin-osc}–\eqref{eq:EP}.
Then $f(t)$ is aperiodic (chaotic in the sense of the underlying symbolic dynamics), while the invariant \eqref{eq:Lewis} remains exact.
Consequently, the motion of \eqref{eq:lin-osc} is confined to an invariant tube defined by $I=\mathrm{const}$, i.e.\ the system is \emph{tube integrable}.

\section{Properties and computation of the function $\alpha_2(t)$}
As has been shown in section 2, the function $\alpha_2(t)$ given by the third order nonlinear differential equation (S3) via its solutions generates an infinite number of integrable systems for the associated $z$ equation (\ref{eq:z_basic}). In this section we discuss the properties of $\alpha_2(t)$ as well as perturbative approaches to this explicitly time dependent nonlinear equation.

\subsection{Transformation of the independent variable}
We first rewrite the dependent variable $\alpha_2$ replacing it by $y$ and start from the original equation
\begin{equation}
	y'''+4\omega^2 y'-[C_1 \cos{\omega t}+C_2 \sin{\omega t}]y^{-\frac{5}{2}}=0 \label{eq:y(t)}
\end{equation} 
Next, we simplify the equation, rescaling the independent variable as
\begin{equation}
	\tau=\omega t \label{eq:tau}
\end{equation}
leading to
\begin{equation}
	y'''+4y'-\frac{1}{\omega^3}[C_1 \cos{\tau}+C_2 \sin{\tau}]y^{-\frac{5}{2}}=0 \label{eq:y_red}
\end{equation}
where from this point ' denotes the derivative w.r.t. $\tau$. We consider all the variables and constants included in (\ref{eq:y_red}) to be real numbers.

\subsection{Positivity of $y(\tau)$}
We prove the important Lemma: 

\medskip

\noindent Lemma: All solutions of (\ref{eq:y_red}) are strictly positive functions, $y(\tau) > 0$ 

\medskip

\noindent Proof by contradiction: We rewrite (\ref{eq:y_red}) as
\begin{equation}
	y'''+4y'= \frac{1}{\omega^3}[C_1 \cos{\tau}+C_2 \sin{\tau}]y^{-\frac{5}{2}}
\end{equation}
Let's consider the existance of an interval $(\tau_1,\tau_2)$ in which $y<0$. Then the left side of the above equation remains a real function. However, the right hand side of the equation, due to the power $-\frac{5}{2}$, becomes a pure imaginary function which makes it impossible to satisfy the differential equation. In order to avoid the contradiction, we conclude $y>0$ for all possible solutions of (\ref{eq:y_red}). q.e.d.
\medskip

\subsection{Reduction of the non autonomous term to a single cosine function}

Since the nonautonomous part of (\ref{eq:y_red}) can be written as a cosine function in the form:
\begin{equation}
	C_1 \cos{\tau}+C_2 \sin{\tau} =\sqrt{C_1^2+C_2^2}\cos(\tau-\Phi)
\end{equation}
we may, without loss of generality, rewrite the y-equation as
\begin{equation}
	y'''+4y'=\varepsilon \cos{\tau} y^{-\frac{5}{2}} \label{eq:y_simp}
\end{equation} 
by using 
\begin{equation}
	\varepsilon=\frac{C}{\omega^3} \; \; ; \; \; C=\sqrt{C_1^2+C_2^2}
\end{equation}
as well as applying a linear constant shift of $\Phi$ to the independent variable.
\subsection{Concerning the existence of a periodic solution for $y(\tau)$}
\label{sec:alpha2-periodicity}

We now address the question whether the function $y(\tau)$
can admit strictly periodic
solutions. As has been shown, $y(\tau)$ satisfies
\begin{equation}\label{eq:alpha2-eq}
	y''' + 4 y' + \varepsilon y^{-5/2}\cos \tau = 0, 
	\qquad y(\tau) = \alpha_2(t), \quad \varepsilon=\frac{C}{\omega^3},
\end{equation}
with $\tau=\omega t$. 

\begin{proposition}[Non-existence of small $2\pi$--periodic solutions]
	\label{prop:no-periodic-alpha2}
	Let $m>0$ and suppose that $y(\tau)=\alpha_2(t)$ is a $2\pi$--periodic solution of
	\eqref{eq:alpha2-eq} with mean value
	\[
	m = \tfrac{1}{2\pi}\int_0^{2\pi} y(\tau)\, d\tau.
	\]
	Then for $\varepsilon \neq 0$ and $C_1\neq 0$ no such solution bifurcates
	from the constant state $y\equiv m$.
\end{proposition}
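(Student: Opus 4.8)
The plan is to treat this as a bifurcation (Lyapunov--Schmidt) problem for $2\pi$--periodic solutions and to show that a resonance at second order in $\varepsilon$ obstructs the continuation of the constant state. Working with the reduced form \eqref{eq:alpha2-eq}, write the linear part as $L := \partial_\tau^3 + 4\partial_\tau$. On a Fourier mode one has $L\,e^{ik\tau} = ik(4-k^2)e^{ik\tau}$, so on the space of $2\pi$--periodic functions the kernel of $L$ is exactly
\[
\ker L = \mathrm{span}\{\,1,\ \cos 2\tau,\ \sin 2\tau\,\}.
\]
Since $y\equiv m$ solves \eqref{eq:alpha2-eq} only when $\varepsilon=0$, any periodic branch emanating from it must be sought as an expansion $y(\tau;\varepsilon) = m + \varepsilon y_1(\tau) + \varepsilon^2 y_2(\tau) + \cdots$, with each $y_j$ required to be $2\pi$--periodic.

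First I would carry out the expansion order by order. At $O(\varepsilon)$ the equation reads $L y_1 = -m^{-5/2}\cos\tau$; the forcing sits at frequency $1\notin\{0,2\}$, so it is nonresonant and (dropping homogeneous pieces) $y_1 = -\tfrac{1}{3} m^{-5/2}\sin\tau$. At $O(\varepsilon^2)$ I would expand $y^{-5/2} = m^{-5/2}\bigl(1 - \tfrac{5}{2} m^{-1}\varepsilon y_1 + \cdots\bigr)$, which yields
\[
L y_2 = \tfrac{5}{2}\, m^{-7/2}\, y_1 \cos\tau = -\tfrac{5}{12}\, m^{-6}\,\sin 2\tau,
\]
using $\sin\tau\cos\tau = \tfrac{1}{2}\sin 2\tau$. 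The right--hand side now lies in $\ker L$. Because $L$ is skew--adjoint on $2\pi$--periodic functions ($L^*=-L$, hence $\operatorname{coker}L=\ker L$), the Fredholm solvability condition for a periodic $y_2$ is that the forcing be $L^2$--orthogonal to $\{1,\cos 2\tau,\sin 2\tau\}$. Here the forcing has a nonzero $\sin 2\tau$ component precisely when $\varepsilon\neq0$ (equivalently $C=\sqrt{C_1^2+C_2^2}\neq0$), so the condition fails and any particular $y_2$ must contain a secular term $\propto\tau\cos 2\tau$, destroying periodicity.

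To turn this formal obstruction into a genuine non--existence statement I would run the Lyapunov--Schmidt reduction. Splitting $y = m + \tilde y + b\cos 2\tau + c\sin 2\tau$ with $\tilde y\perp\ker L$, the range equation $QF=0$ is uniquely solvable for $\tilde y=\tilde y(b,c,\varepsilon)$ by the implicit function theorem (as $L$ is invertible on $(\ker L)^\perp$), giving $\tilde y=-\tfrac{\varepsilon}{3}m^{-5/2}\sin\tau+O(\varepsilon^2)$. The surviving bifurcation equations are the three projections of $y^{-5/2}\cos\tau$ onto $\ker L$. I would single out the $\sin 2\tau$ projection and show its leading term is
\[
\frac{1}{\pi}\int_0^{2\pi} y^{-5/2}\cos\tau\,\sin 2\tau\,d\tau = \tfrac{5}{12}\,\varepsilon\, m^{-6} + (\text{higher order}).
\]
The decisive point is that the amplitudes $b,c$ of the resonant modes enter this integral only at higher order: since $\cos\tau\sin 2\tau$ has frequency content $\{1,3\}$, it is $L^2$--orthogonal to both $\cos 2\tau$ and $\sin 2\tau$, so the linear--in--$(b,c)$ contributions vanish and cannot cancel the $O(\varepsilon)$ term; the same orthogonality shows a shift of the mean $m$ is equally ineffective. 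Hence for small $\varepsilon\neq0$ the $\sin 2\tau$ bifurcation equation has no root with $(b,c,\tilde y)\to 0$, i.e. no $2\pi$--periodic branch approaches the constant state $y\equiv m$.

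I expect the main obstacle to be exactly this final decoupling/solvability step: verifying rigorously that no choice of the free kernel amplitudes (the frequency--$2$ modes $b,c$, or a shift of $m$) can absorb the resonant forcing, and controlling the higher--order terms of the bifurcation function so they cannot conspire to restore solvability. The structural reason the obstruction is robust is that the natural frequency of $L$ is pinned at $2$, independent of $m$, while the drive sits at frequency $1$, so the second harmonic of the forcing is locked onto the resonance and no detuning parameter is available --- unlike Poincaré--Lindstedt for autonomous oscillators, the period here is fixed at $2\pi$ by the external $\cos\tau$. Making the heuristic ``secular term'' statement into the clean conclusion that the $\sin 2\tau$ component is $\tfrac{5}{12}\varepsilon m^{-6}+o(\varepsilon)\neq0$ is the technical heart of the argument.
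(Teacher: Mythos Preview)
Your argument is correct and follows the same route as the paper's sketch: identify $\ker L=\mathrm{span}\{1,\cos2\tau,\sin2\tau\}$, compute the $O(\varepsilon)$ correction $y_1\propto\sin\tau$, and show that at $O(\varepsilon^2)$ an unavoidable $\sin2\tau$ forcing violates the Fredholm solvability condition. Your Lyapunov--Schmidt treatment is in fact more careful than the paper (which does not discuss the kernel freedom at all); the observation that closes the gap you flag is that at $\varepsilon=0$ one has $y=m+b\cos2\tau+c\sin2\tau$, which is $\pi$--periodic, so $y^{-5/2}\cos\tau$ carries only odd harmonics and its $\sin2\tau$ projection vanishes identically in $(b,c)$, forcing $G_{\sin2\tau}(b,c,\varepsilon)=\varepsilon\bigl(\tfrac{5}{12}m^{-6}+o(1)\bigr)$.
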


\begin{proof}[Sketch of proof]
	Write $y(\tau)=m+u(\tau)$ with $\int_0^{2\pi} u(\tau)\, d\tau=0$.
	Linearization gives the operator $L = D^3+4D$ on $2\pi$--periodic functions,
	with kernel
	\[
	\ker L = \mathrm{span}\{1,\sin(2\tau),\cos(2\tau)\}.
	\]
	At order $\mathcal O(\varepsilon)$ one finds a $2\pi$--periodic correction
	$u_1(\tau)=a_1\sin\tau$. At order $\mathcal O(\varepsilon^2)$, however,
	the nonlinear expansion of $y^{-5/2}$ generates a forcing proportional
	to $\sin(2\tau)$, which lies inside $\ker L$. By Fredholm’s alternative,
	solvability requires this resonant component to vanish, but its coefficient
	$\propto \varepsilon^2 m^{-6}$ is nonzero for $\varepsilon \neq 0$. Hence
	the solvability condition fails and no small $2\pi$--periodic solution can
	exist near $y\equiv m$.
\end{proof}

\begin{remark}\label{rem:alpha2-aperiodic}
	The resonance obstruction is fundamental. Since the forcing $\cos\tau$
	fixes the external frequency, Poincar\'e--Lindstedt detuning is not available
	in this non-autonomous setting. The consequences are:
	\begin{itemize}
		\item Up to first order in $\varepsilon$ one finds a strictly
		$2\pi$--periodic approximation of $\alpha_2(t)$, so the solution appears
		``almost periodic'' on short time scales.
		\item At second order the resonant $\sin(2\tau)$ contribution produces
		unavoidable secular terms, destroying exact periodicity.
		\item For $C_1=0$ the constant solution remains valid. For $C_1\neq 0$,
		$\alpha_2(t)$ is never exactly $2\pi$--periodic: it is at best
		\emph{almost periodic}, but in fact genuinely aperiodic.
		\item As a consequence, the invariant $I(z,p,t)$ of the $z$--equation
		still exists, but induces only \emph{tube integrability} in the extended
		phase space $(z,p,t)$, not a genuine invariant torus.
	\end{itemize}
\end{remark}
\subsection{Fourier expansion and resonance mechanism}
\label{sec:fourier}
For completeness we display the Fourier system corresponding to
equation~\eqref{eq:alpha2-eq}. We expand
\begin{equation}\label{eq:fourier-ansatz}
	y(\tau) = c_0 + \sum_{n\ge 1}\big(c_n \cos(n\tau) + s_n \sin(n\tau)\big),
\end{equation}
with $c_0>0$ the mean value.

\subsubsection{First order}
At order $\mathcal O(\varepsilon)$ one obtains
\[
s_1 = -\frac{\varepsilon}{3}\,c_0^{-5/2}, \qquad 
c_n = s_n = 0 \ \ (n\ge 2).
\]
Thus the first correction is purely $\sin\tau$, consistent with the forcing
phase.

\subsubsection{Second order}

Expanding $y^{-5/2}$ produces quadratic terms of the form
\[
(c_0 + s_1\sin\tau)^{-5/2}
= c_0^{-5/2}\Bigl( 1 - \tfrac{5}{2}\tfrac{s_1}{c_0}\sin\tau
+ \tfrac{35}{8}\tfrac{s_1^2}{c_0^2}\sin^2\tau + \cdots \Bigr).
\]
The $\sin^2\tau$ term generates a second harmonic $\propto \sin(2\tau)$
with coefficient
\[
-\frac{5}{96}\,\varepsilon^2\, c_0^{-6}.
\]
Since $(D^3+4D)(\sin 2\tau)=0$, this contribution lies in the kernel of
the linear operator and produces a secular term $\propto \tau\sin(2\tau)$.
This illustrates explicitly why the Fourier system has no consistent
$2\pi$--periodic solution at order $\varepsilon^2$.

\subsubsection{Third order}
For completeness, the cubic term forces a third harmonic
\[
s_3 = -\frac{7}{864}\,\varepsilon^3\, c_0^{-19/2}, \qquad c_3=0.
\]
Higher harmonics can in principle be computed recursively, but the
inconsistency at order $\varepsilon^2$ already prevents exact periodicity.

\medskip
In summary, the Fourier system \eqref{eq:fourier-ansatz} confirms the
Proposition~\ref{prop:no-periodic-alpha2}: the unavoidable $\sin(2\tau)$
forcing at second order makes the system unsolvable in the class of
$2\pi$--periodic functions. The apparent periodicity at first order is
destroyed by resonance at higher order.

\section{Perturbation theoretic computation of \(y(\tau)\)}
Since (\ref{eq:y_simp}) is a nonlinear, explicitly time dependent differential equation of order 3, we may not expect to find closed analytical expressions to its solutions. It is for this reason, that we introduce a perturbative treatment in order to approach the equations behaviour in an analytical way. It turned out however, that we are left with simpler expressions if we first integrate the entire equation w.r.t. $\tau$, leading to a volterra type integro-differential equation:

\begin{equation}\label{eq:y-ode}
	y''(\tau) + 4\,y(\tau) \;=\; y_0''+4\,y_0 \;+\; \varepsilon \int_{0}^{\tau} y(s)^{-5/2}\,\cos s \, ds,
\end{equation}

As can be seen the solution of this equation depends on 4 free parameters, namely the initial conditions $y_0$,$y_0'$ and $y_0''$ as well as on the parameter $\varepsilon$ defining the strength of the nonlinear contribution in the integrand. However, in order to keep the computations straightforward as possible, we set $y_0'=y_0''=0$ and remain with just two independent parameters $y_0$ and $\varepsilon$. Under this restriction (\ref{eq:y-ode})  reduces to
\begin{equation}
	y''+4y=4y_0+\varepsilon \int_0^\tau y(s)^{-\frac{5}{2}} \cos{s} ds \label{eq:y_pert}
\end{equation}
Next we take care of the fact that $y(\tau)$
for all real values of the independent variable. In order to keep this property for the entire perturbation procedure, we use the subtitution:
\begin{equation}
	y(\tau)=e^{\rho(\tau)}  
\end{equation}
Under this substitution we get
\begin{equation}
	y'=y\rho'  ;  y''=y'\rho'+y\rho''=y\rho'^2+y\rho''
\end{equation}
Inserting the so obtained subtitutions into (\ref{eq:y_pert}) and dividing by $e^\rho$ gives the new integro-differential equation for $\rho(\tau)$ as:

\begin{equation}\label{eq:rho-master}
	\rho''+(\rho')^2+4 \;=\; 4y_0 e^{-\rho} \;+\; \varepsilon\,e^{-\rho}\!\int_0^{\tau} e^{-5\rho(s)/2}\cos s \, ds .
\end{equation}
We aim for a perturbation expansion up to $\varepsilon^2$, hence we let:
\begin{equation}
	\rho(\tau)=\rho_0+\varepsilon\rho_1(\tau))+\varepsilon\rho_2(\tau)+\mathcal O(\varepsilon^3)
\end{equation}

\subsection*{Order \(\mathcal O(1)\)}
Comparing terms independent of $\varepsilon$
from \eqref{eq:rho-master} we get \(4=4y_0 e^{-\rho_0}\), hence
\begin{equation}\label{eq:rho0}
	\boxed{\;\rho_0=\ln y_0.\;}
\end{equation}

\subsection*{Order \(\mathcal O(\varepsilon)\)}
Collecting \( \mathcal O(\varepsilon) \)-terms gives
\begin{equation}\label{eq:rho1-ode}
	\rho_1''+4\rho_1 \;=\; y_0^{-7/2}\,\sin \tau, 
	\qquad \rho_1(0)=\rho_1'(0)=0,
\end{equation}
so that
\begin{equation}\label{eq:rho1-sol}
	\boxed{\;
		\rho_1(\tau)\;=\;y_0^{-7/2}\!\left(\frac{1}{3}\sin\tau-\frac{1}{6}\sin 2\tau\right).
		\;}
\end{equation}

\subsection*{Order \(\mathcal O(\varepsilon^2)\)}
At second order one finds
\begin{equation}\label{eq:rho2-ode}
	\rho_2''+4\rho_2 \;=\; 
	2\rho_1^2-(\rho_1')^2 
	\;+\; y_0^{-7/2}\!\left[-\frac{5}{2}\!\int_0^{\tau}\!\rho_1(s)\cos s\,ds-\rho_1(\tau)\sin\tau\right] ,
\end{equation}
\begin{equation}
	\rho_2(0)=\rho_2'(0)=0. \nonumber
\end{equation}
Inserting \eqref{eq:rho1-sol} and evaluating the elementary integrals yields the explicit solution
\begin{equation} \boxed{
\begin{aligned}\label{eq:rho2-sol} 
			\rho_2(\tau)
			&=y_0^{-7}\Bigg[
			-\frac{5}{288}
			-\frac{1}{24}\cos\tau
			+\frac{19}{288}\cos 2\tau
			-\frac{1}{72}\cos 3\tau \\
			&+\frac{1}{144}\cos 4\tau 
			+\frac{5}{96}\,\tau\sin 2\tau
			\Bigg]. 
\end{aligned}
}
\end{equation}
The term \( \frac{5}{96}y_0^{-7}\,\tau\sin(2\tau) \) encodes the intrinsic second–order secular response at the \(2{:}1\) resonance.

\subsection*{Composite \(\mathcal O(\varepsilon^2)\) approximation for \(y(\tau)\)}
Expanding \(e^{\rho}\) to second order gives
\begin{equation}\label{eq:yO2}
	y(\tau)=e^{\rho(\tau)}
	= y_0\left[\,1+\varepsilon\rho_1(\tau)+\varepsilon^2\Big(\rho_2(\tau)+\tfrac{1}{2}\rho_1(\tau)^2\Big)\right]
	\;+\;\mathcal O(\varepsilon^3),
\end{equation}
with \(\rho_1,\rho_2\) from \eqref{eq:rho1-sol}–\eqref{eq:rho2-sol}.

\paragraph{Scaling remark.}
Writing \(y(\tau)=y_0\,\tilde y(\tau)\) and normalizing \eqref{eq:y-ode} shows that the effective small parameter is
\[
\varepsilon_{\mathrm{eff}}=\varepsilon\,y_0^{-7/2},
\]
which explains the strong improvement of accuracy for larger \(y_0\) and the deterioration for \(y_0\ll 1\).

\paragraph{Heuristic validity window.}
The dominant secular piece in \eqref{eq:yO2} scales like 
\(
\sim \frac{5}{96}\,\varepsilon^2\,y_0^{-6}\,\tau\sin(2\tau).
\)
A practical criterion is therefore
\begin{equation}
	\tau<\tau^*=\frac{96y_0^6}{5\varepsilon^2}
	\label{eq:validity}
\end{equation}

\subsection{Third-order contribution $\rho_{3}(\tau)$}

We recall the Volterra formulation
\begin{equation}
	\rho''(\tau)+4\,\rho(\tau)
	= -\frac{5}{2}\,\varepsilon\,e^{-\frac{5}{2}\rho(\tau)}\int_{0}^{\tau} e^{-\frac{5}{2}\rho(s)}\cos s\,ds,
\end{equation}
with $\rho(0)=\rho'(0)=0$.  
Expanding $\rho(\tau)=\sum_{n\ge1}\varepsilon^{n}\rho_{n}(\tau)$ and the RHS of (\eqref{eq:rho-master}) up to order $\varepsilon^{3}$ yields
\begin{equation}
	\rho_{3}''(\tau)+4\,\rho_{3}(\tau)=F_{3}(\tau),
\end{equation}
where the source term is
\begin{align}
	F_{3}(\tau)
	&=
	-\frac{5}{2}\,e^{-\frac{5}{2}\rho_{0}}
	\Bigg[
	-\frac{5}{2}\rho_{2}(\tau)
	+\frac{25}{8}\rho_{1}(\tau)^{2}
	\Bigg]
	\int_{0}^{\tau} e^{-\frac{5}{2}\rho_{0}}\cos s\,ds
	\notag\\
	&\qquad
	-\frac{5}{2}\,e^{-\frac{5}{2}\rho_{0}}
	\int_{0}^{\tau}
	\left[
	-\frac{5}{2}\rho_{2}(s)+\frac{25}{8}\rho_{1}(s)^{2}
	\right]
	e^{-\frac{5}{2}\rho_{0}}
	\cos s\,ds .
\end{align}
Here $\rho_{1}$ and $\rho_{2}$ are the previously obtained first and second order solutions.
The inhomogeneous equation for  with initial data $\rho_{3}(0)=\rho_{3}'(0)=0$ can easily be solved by comparing the coefficients of all trigonometric functions and of the appearing resonance terms. Finally we obtain:
	
\begin{equation}
\boxed{
\begin{aligned} \label{eq:rho3_sol}
\rho_{3}(\tau)
&=\frac{1}{2592\,y_{0}^{21/2}}\Big[ 
-\frac{45}{4}\,\tau
+\frac{135}{4}\,\tau\cos\tau
-\frac{45}{2}\,\tau\cos(2\tau)
+\frac{45}{4}\,\tau\cos(3\tau) \\
&-\frac{45}{4}\,\tau\cos(4\tau) 
+\frac{159}{2}\,\sin\tau
-\frac{327}{4}\,\sin(2\tau)
+\frac{73}{4}\,\sin(3\tau)
+\frac{69}{8}\,\sin(4\tau) \\
&-\frac{9}{4}\,\sin(5\tau)
+\sin(6\tau)
\Big].
\end{aligned} 
}
\end{equation}

\subsection{Comparison of the perturbation results to numeric integration}
As a first example we use the parameters $\varepsilon=0.1$ and $y_0=1$ and compare the analytical and numerical solutions in the interval $400<\tau<500$. While the numerical results have been obtained by use of a Runge-Kutta 4 integration of (\ref{eq:y_simp}), the analytical result comes from evaluating the above perturbation theoretic computations to third order in $\varepsilon$.
Visualisation of the results has been restricted to the end of the integration interval, since in this region we expect the largest  differences between analysis and numeric integration. However, we observe a perfect agreement between the analytical (red) and numerical solutions (green), which is consistent with (\ref{eq:validity}) , leading to a range of validity of $\tau_\ast<1920$ which lies far above the integration interval of $\tau=[0,500]$. In Fig. 3 we plot the numeric solution (red) together with the perturbation theoretic result (green).

\begin{figure}[H]
	\centering
	  \includegraphics[width=\textwidth]{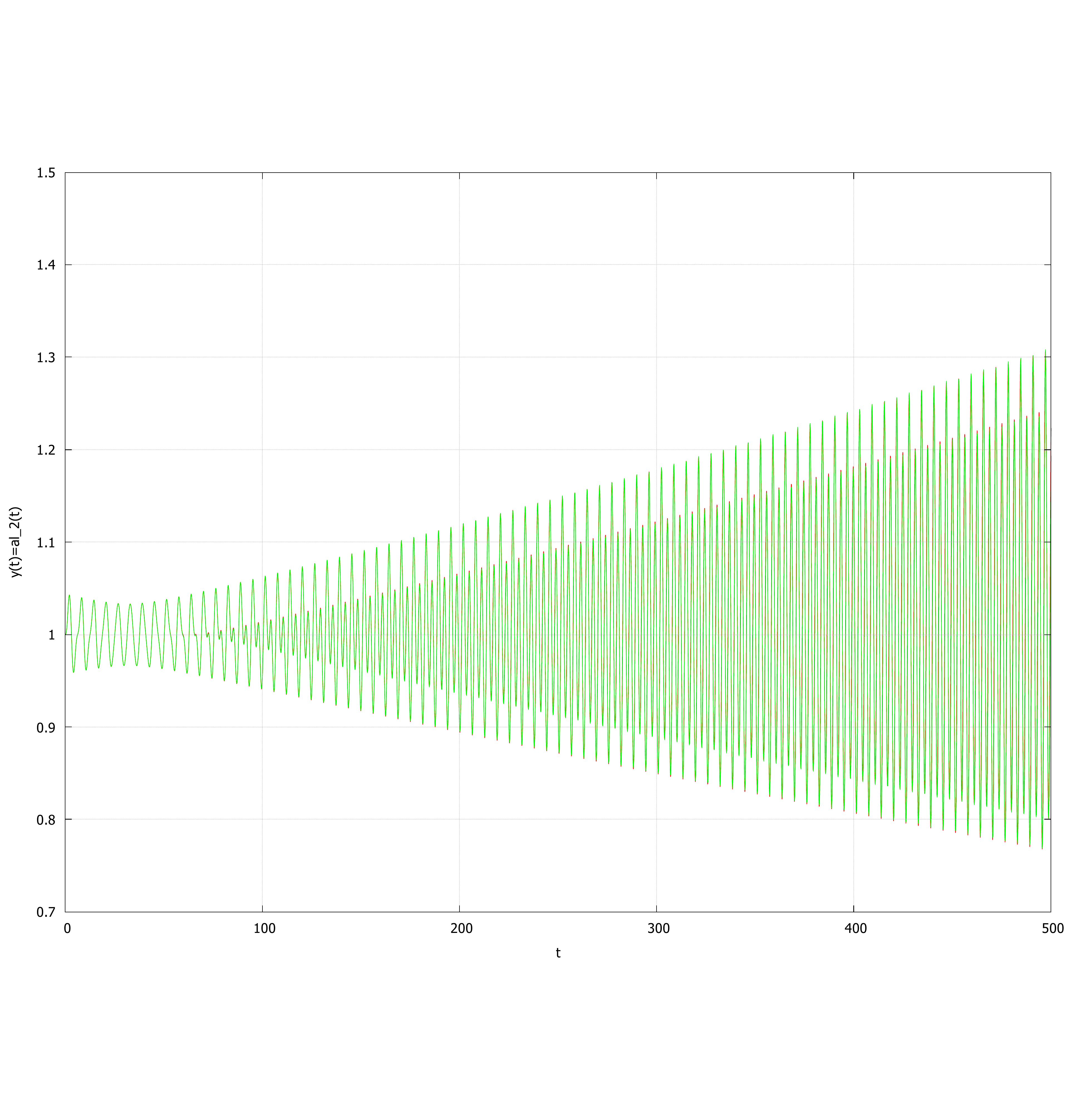}
	  \caption{Numeric versus analytic solution of (4.6) for $y_0=1$, $\varepsilon=0.1$.}
\end{figure}

As can be seen, we have an an excellent agreement of the analytical result and the numerical integration for $y_0=1$ and $\epsilon=0.1$. This is consistent with the theoretical validity region computed in (\ref{eq:validity})  which reaches up to  $\tau_\ast=1920$, well above the integration interval used. However, decreasing $y_0$ to $0.7$ the agreement between numerical and analytic computations starts to become less accurate. In the first interval $0<t<300$ shown in Fig. 4 the agreement is still reasonable up to about $\tau=200$, the theoretical limit being $\tau_\ast=225$. Above this limit the functions start to differ significantly. This becomes clearly visible in Fig. 5 where the numeric and analytic solutions differ by nearly $50\%$ at $\tau=500$.
\begin{figure}[H]
    \includegraphics[width=\textwidth]{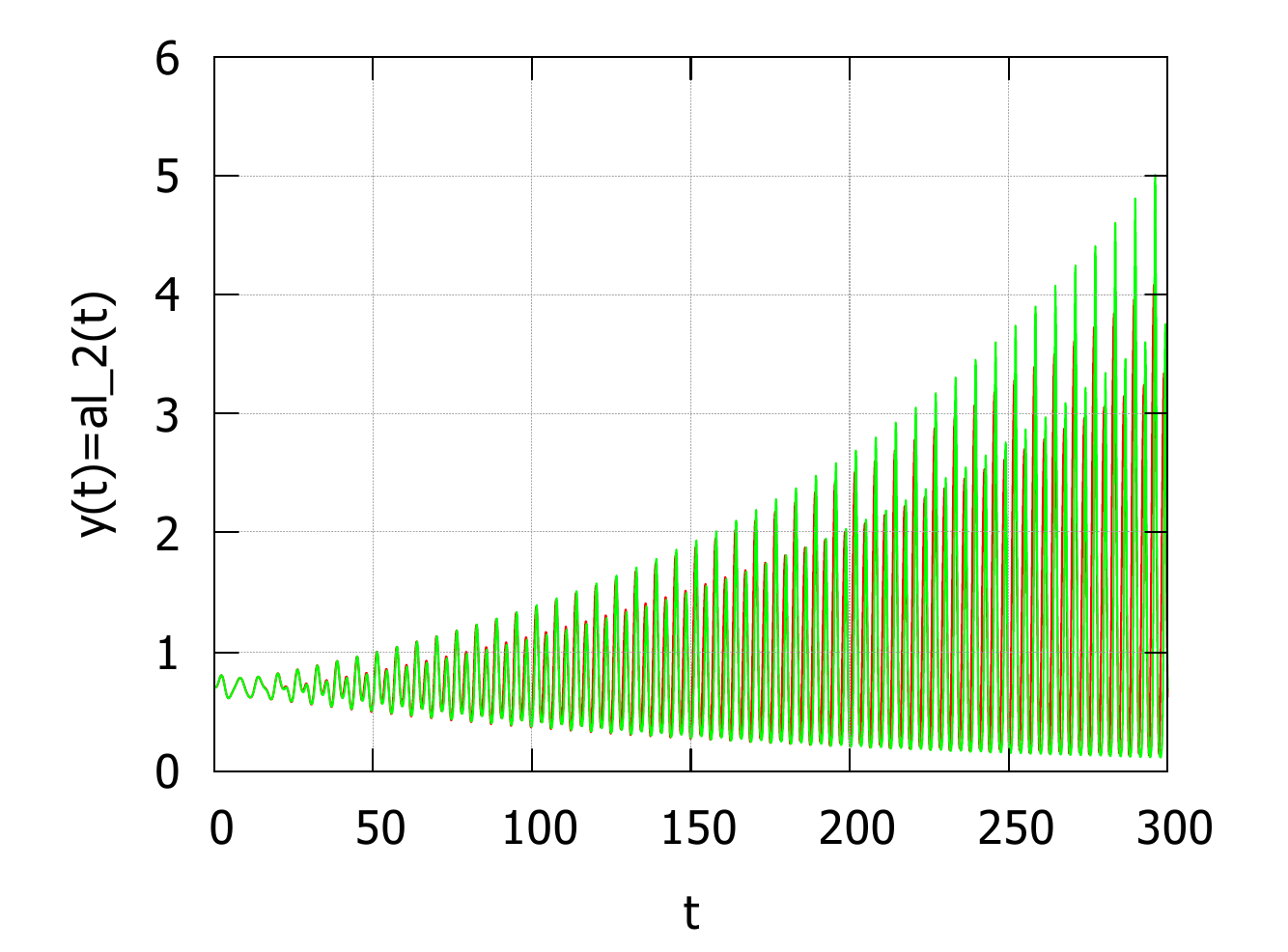}
    \caption{Numeric versus analytic solution when $y_0=0.7$, $\varepsilon=0.1$ and $0<t<300$}
\end{figure}
	
\begin{figure}[H]
	\includegraphics[width=\textwidth]{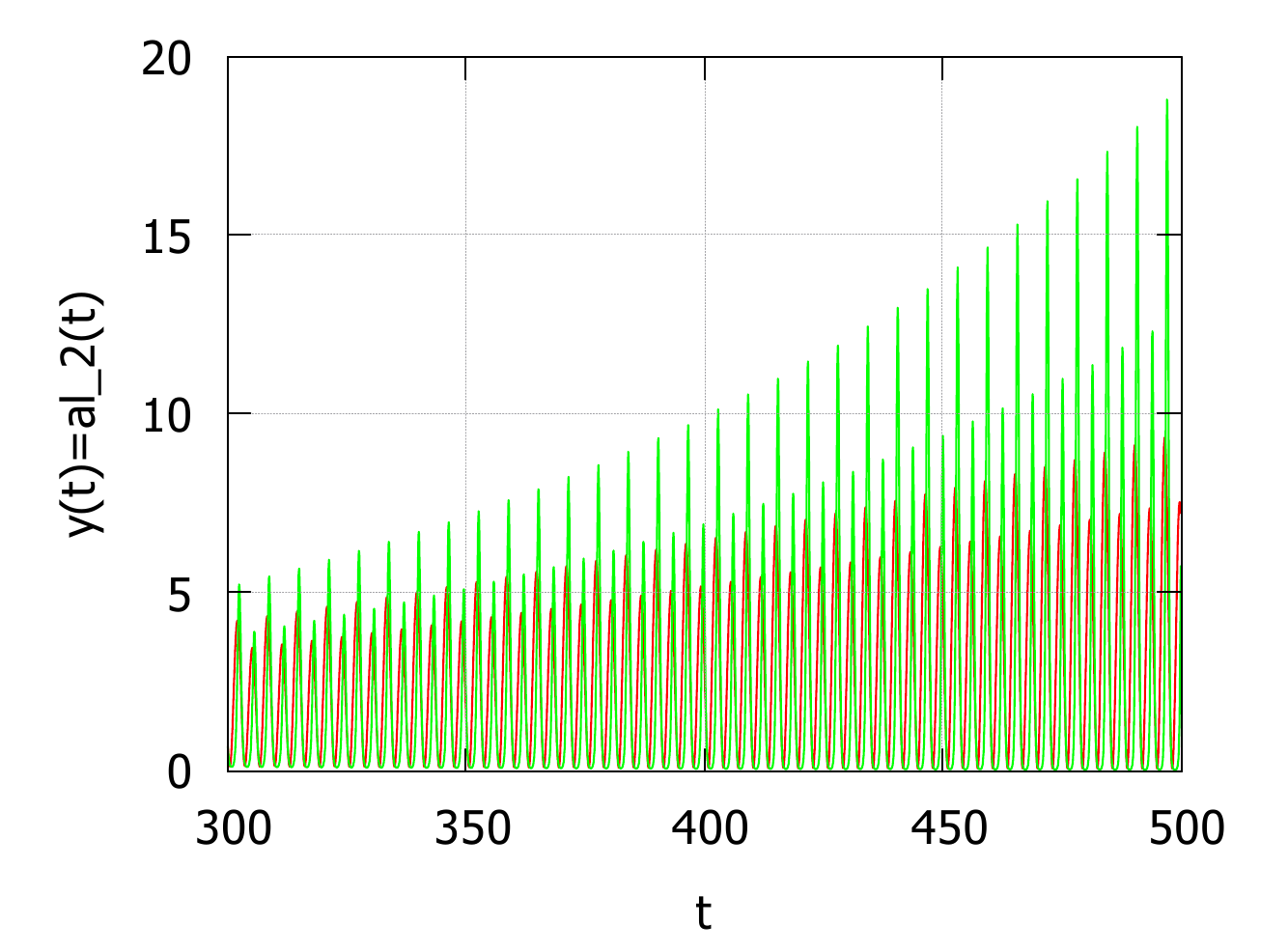}
	\caption{Numeric versus analytic solution when $y_0=0.7$, $\varepsilon=0.1$ and $300<t<500$}
\end{figure}
 
Note that for all graphs, the positivity property of $y$ is still strictly preserved due to consequently keeping the ansatz $y=e^{\rho(\tau)}$ throughout the computations. 

In conclusion, the good agreement between the analytical and numerical results 
within the predicted interval $\tau < \tau_\ast$ provides a clear confirmation of 
criterion~(5.13). The perturbative expansion up to $\mathcal{O}(\varepsilon^3)$ 
accurately reproduces the dynamics of~(4.6) as long as the secular term remains 
sufficiently small, while the onset of deviation beyond $\tau_\ast$ marks the natural limit of 
the perturbative validity. This demonstrates that the analytical approach not only 
captures the qualitative structure of $y(\tau)$ but also yields quantitatively 
reliable results within its theoretical window of applicability.

\section{Evaluation of the results}
In order to keep the analytic expressions simple as possible, for the evaluation of results we will restrict to the case $\omega=1$. In this case $\tau=t$ and $C_1=\varepsilon$. As before we keep $C_2=0$. So we inspect the invariant I related to 
\begin{equation}
	\label{eq:zom1}
	z''+z+g(t)z^2=0 \; ; \; g(t)=\alpha_2(t)^{-\frac{5}{2}} 
\end{equation}
In the following subsection we compute the analytical expression for the invariant $I$ based on the perturbation expansion of $\alpha_2(t)=y(t)$ to third order in $\varepsilon$:
\begin{equation}
\alpha_2(t)=y_0e^{\varepsilon\rho_1(t)+\varepsilon^2\rho_2(t)+\varepsilon^3\rho_3(t)+O(\varepsilon^4)}
\end{equation}

\subsection{Analytical approximation of the invariant $I(z,p,t$)}
We start from the invariant as given in (S4) with $\omega=1$, $C_1=\varepsilon$ and $C_2=0$.
\begin{align}	
\label{eq:invariant}
I(z,p,t) =  \alpha_2(t)p^2 - \alpha_2'(t)zp + \alpha_1(t)p 
 +\Big(\alpha_2(t)+\tfrac{1}{2}\alpha_2''(t)\Big)z^2 \\
 - \alpha_1'(t) z + \tfrac{2}{3}\alpha_2(t) g(t) z^3 \;=\; K 	\nonumber 
\end{align}	

From the perturbative treatment:
\begin{equation}
	\alpha_2(t)=y_0e^{\varepsilon\rho_1(t)+\varepsilon^2\rho_2(t)+\varepsilon^3\rho3(t)}
\end{equation}
follows
\begin{equation}
	\alpha_2'(t)=\alpha_2(t)\left(\varepsilon\rho_1'(t)+\varepsilon^2\rho_2'(t)+\varepsilon^3\rho3'(t)\right)
\end{equation}
In principle we could derive $\alpha_2''(t)$ from $\alpha'(t)$ by direct differentiation but this would generate an inacurracy due to amplification of numerical errors in truncated expansions while differentiating. Instead we derive $\alpha_2''(t)$ directly from the Volterra equation (\ref{eq:y_pert}) and get
\begin{equation}
	\alpha_2''(t)=4[y_0-\alpha_2(t)]+\varepsilon \int_0^t \alpha_2(s)^{-\frac{5}{2}} \cos{s} ds
\end{equation}
Using this method we replace differentiation of truncated expressions by integration which avoids numerical error amplification. Expanding the integrand w.r.t. $\varepsilon$ finally the invariant (\ref{eq:invariant}) is given by: \begin{equation}
	I(z,p,t)=A_1(t)z+A_2(t)p+A_3(t)z^2+A_4(t)zp+A_5(t)p^2+A_6(t)z^3 \label{eq:Ianalytic}
\end{equation}
with
\begin{equation}
	A_1(t)=-\alpha_1'(t)=-\frac{1}{2}\varepsilon \sin{t}
\end{equation}
\begin{equation}
	A_2(t)=\alpha_1(t)=\frac{1}{2}\varepsilon \cos{t}
\end{equation}
\begin{equation}
A_3(t)= \alpha_2(t)+\frac{1}{2} \alpha_2''(t) = A_{31}(t)+y_0e^{A_{32}(t)}
\end{equation}
where

\begin{align}
	A_{31}(t)
	&=
	\frac{\varepsilon^3}{6912y_0^{\frac{19}{2}}} \left(-25\sin{5t}+120\sin{4t}-531\sin{3t}+644\sin{2t} \right. \nonumber \\ 
	&\qquad
	 \left. -230\sin{t}+135t\cos{3t}+120t\cos{2t}-75t \cos{t}\right) \\
	&\qquad 
		+\frac{\varepsilon^2}{144y_0^6}\left(-9\cos{3t}+4\cos{2t}+5\cos{t}\right)+
		\frac{\varepsilon}{6y_0^{\frac{5}{2}}}\left(2\sin{2t}-\sin{t}\right) \nonumber
\end{align}

\begin{align}
	A_{32}(t)
	&=
	\frac{\varepsilon^3}{2592y_0^{\frac{21}{2}}} \left(\sin{6t}-\frac{9}{4}\sin{5t}+\frac{69}{8}\sin{4t}+\frac{73}{4}\sin{3t}-\frac{327}{4}\sin{2t}+ \right.   \nonumber \\ 
	&\qquad \left. \frac{159}{2}\sin{t}
	 -\frac{45}{4}t\cos{4t}+\frac{45}{4}t\cos{3t}-\frac{45}{2}t\cos{2t}+\frac{135}{4}t\cos{t}-\frac{45}{4}t \right) \nonumber \\
	&\qquad 
	+\frac{\varepsilon^2}{y_0^7}\left(\frac{1}{144}\cos{4t}-\frac{1}{72}\cos{3t}+\frac{19}{288}\cos{2t}-\frac{1}{24}\cos{t} \right.  \\  
	c&\qquad
	\left. +\frac{5}{96}t\sin{2t}-\frac{5}{288} \right)+\frac{\varepsilon}{y_0^\frac{7}{2}}\left(\frac{1}{3}\sin{t}-\frac{1}{6}\sin{2t} \right) \nonumber
\end{align}

\begin{align}
	A_4(t)
	&=
	-\frac{\varepsilon^3}{3456y_0^{\frac{19}{2}}} \left( 5cos{5t}-30\cos{4t}+192\cos{3t}-292\cos{2t}+155\cos{t} \right. \nonumber \\ 
	&\qquad
	\left. +45t\sin{3t}+60t\sin{2t}-75t\sin{t}-30 \right) \\
	&\qquad 
	-\frac{\varepsilon^2}{288y_0^6}\left(-12\sin{3t}-7\sin{2t}+20\sin{t}+30t\cos{2t} \right) \nonumber \\  
	&\qquad
	+\frac{\varepsilon}{3y_0^\frac{5}{2}}\left(\cos{t}-\cos{2t} \right) \nonumber
\end{align}

\begin{align}
	A_5(t)
    &=
    \alpha_2(t)=y_0e^{\rho(t)}=y_0e^{\rho(t)}
\end{align}
with $\rho_n(t)$ given in (\ref{eq:rho1-sol}) - (\ref{eq:rho3_sol}).
\begin{equation}
	A_6(t)=\frac{2}{3}\alpha_2(t)g(t)=\frac{2}{3}\alpha_2(t)^{-\frac{3}{2}}=\frac{2}{3}y_0e^{-\frac{3}{2} \rho(t)}
\end{equation}
In order to check for the invariant property of (\ref{eq:Ianalytic}) we numerically integrate the associated nonlinear equation (\ref{eq:zom1}) with initial conditions $z_0=0.2$ , $z'(0)=p_0=0$. over the time interval $0<t<500$ and use the successive values at points $t_n=n\Delta t$ as well as $t_n$ to be inserted into the analytic expression (\ref{eq:Ianalytic}). We then compute the deviation (in $\%$) from the initial value of $I(t)$. In theory this value should be equal to zero but due to the truncation of the perturbative solution, we expect a remaining time variation. We have two free parameters to our disposal, namely $\varepsilon$ and $y_0$, leading to coefficient functions $g_{\varepsilon,y_0}(t)$ entering (\ref{eq:zom1}).
We fix $\varepsilon=0.05$ and use four values for $y_0$, namely $y_0=1.2 , 1.1 , 0.9 , 0.8$.  Inspecting the analytic expressions (\ref{eq:rho1-sol}), (\ref{eq:rho2-sol}) and  (\ref{eq:rho3_sol})	it becomes clear, that this behaviour arises from increasingly high powers of $y_0$ in the denominators of the perturbation contributions which attenuate high order perturbation contributions for $y_0>1$ as it strengthens them when $y_0<1$. Infact in the four figures below we observe the predicted behaviour. While for $y_0>1$ the invariant shows no visible variation (infact in the tenth of $\%$ regime) this changes abruptly as $y_0<1$, so that for $y_0=0.8$ the variation of the approximation reaches $8\%$. 

\noindent {\bf It is important to mention, that these variations are not due to  the fact of the integral itself just being valid locally, but only to a vanishing convergence of the perturbation series in the regime $y_0<1$!}  

Finally we present a figure of the coefficient function $g(t)=\alpha_2(t)^{-\frac{5}{2}}$ rendering (\ref{eq:zom1})  integrable, using $y_0=1.1$ and $\varepsilon=0.05$.  For low values of $t$ it behaves like a damped oscillation reaching an amplitude minimum and then starts growing indefinitely while exhibiting a superposition of alt least two frequencies ($T=\pi$ and $2\pi$) with linearly increasing amplitude.

\begin{figure}[H]
	\centering
	\begin{subfigure}[t]{0.48\linewidth}
		\centering
		\includegraphics[width=\linewidth]{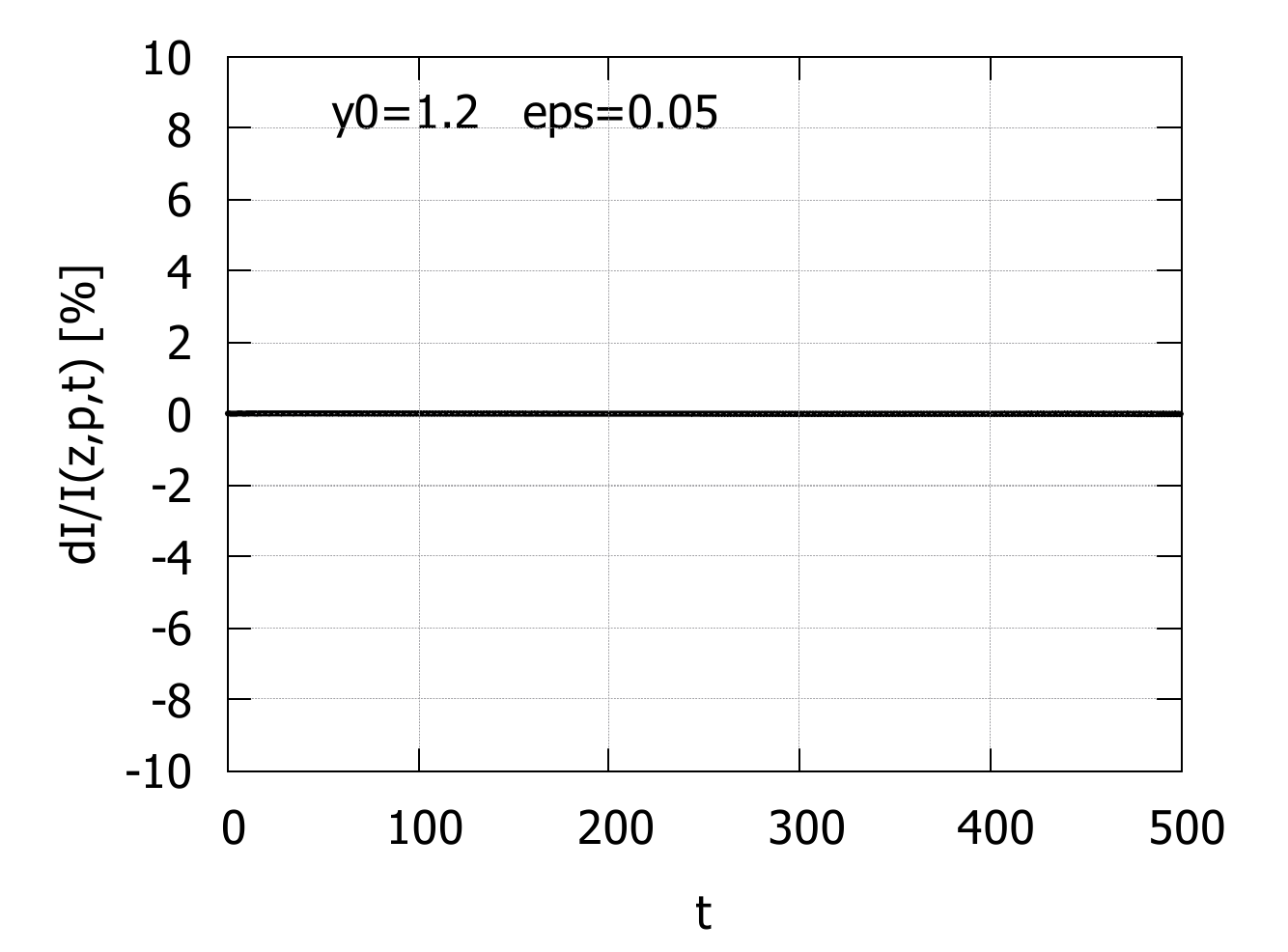}
		\caption{$y_0=1.2,\ \varepsilon=0.05$ — no variations visible.}
		\label{fig:tube-y0-1p2}
	\end{subfigure}\hfill
	\begin{subfigure}[t]{0.48\linewidth}
		\centering
		\includegraphics[width=\linewidth]{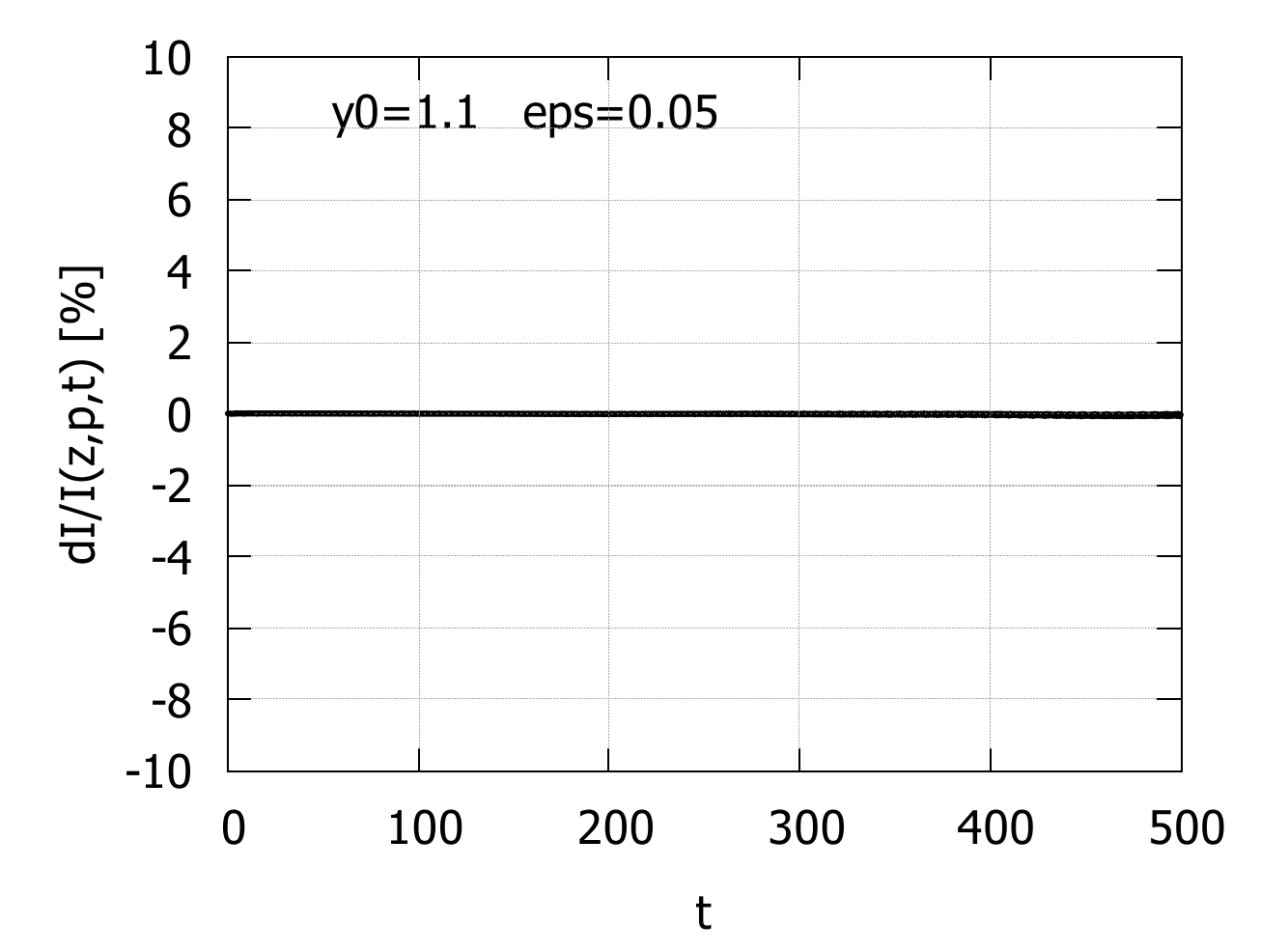}
		\caption{$y_0=1.1,\ \varepsilon=0.05$ — very small variations visible.}
		\label{fig:tube-y0-1p1}
	\end{subfigure}
	
	\vspace{0.8em}
	
	\begin{subfigure}[t]{0.48\linewidth}
		\centering
	\includegraphics[width=\linewidth]{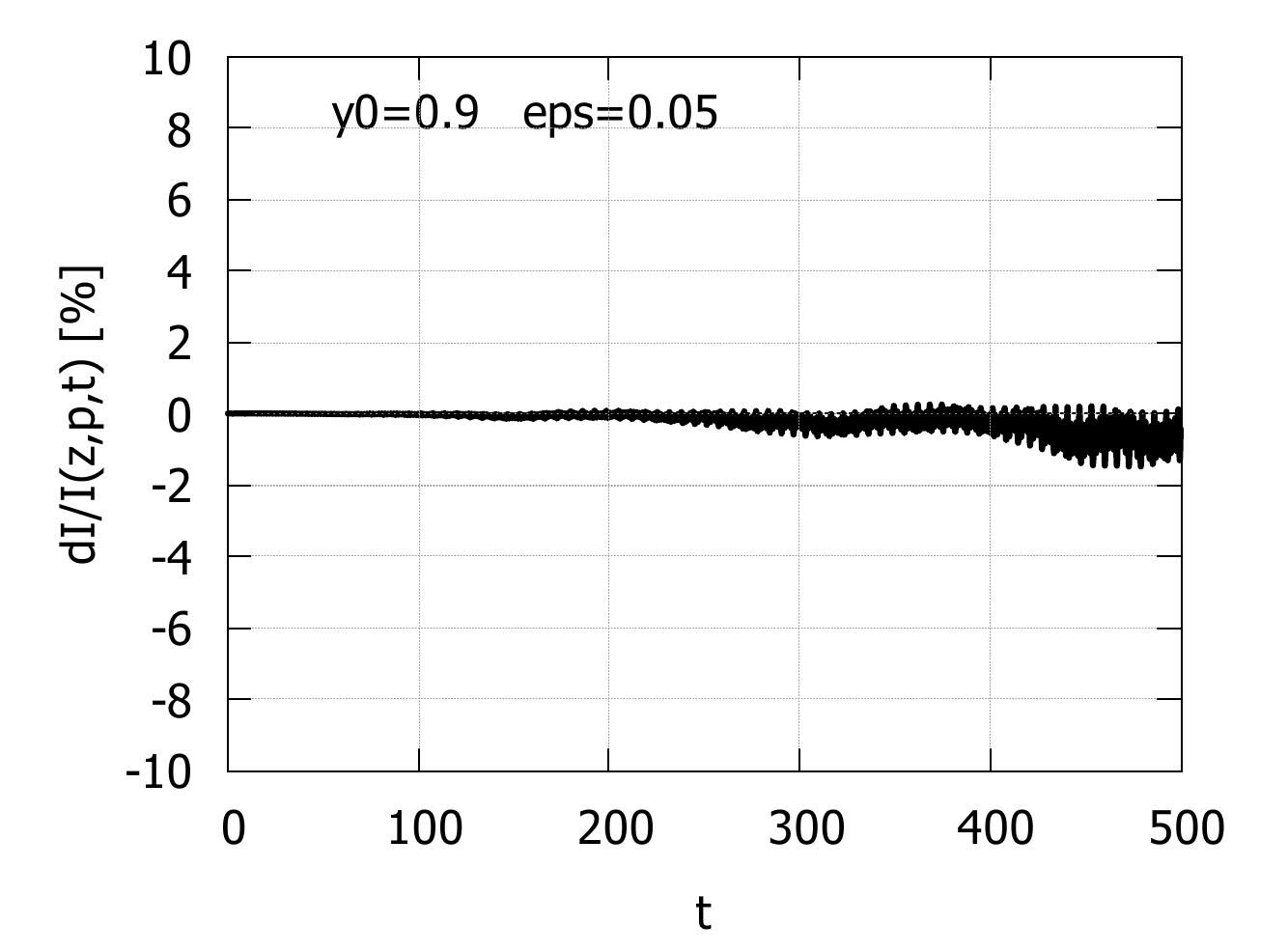}
		\caption{$y_0=0.9,\ \varepsilon=0.05$ — A variation of about 2$\%$ becomes visible}
		\label{fig:tube-y0-0p9}
	\end{subfigure}\hfill
	\begin{subfigure}[t]{0.48\linewidth}
		\centering
		\includegraphics[width=\linewidth]{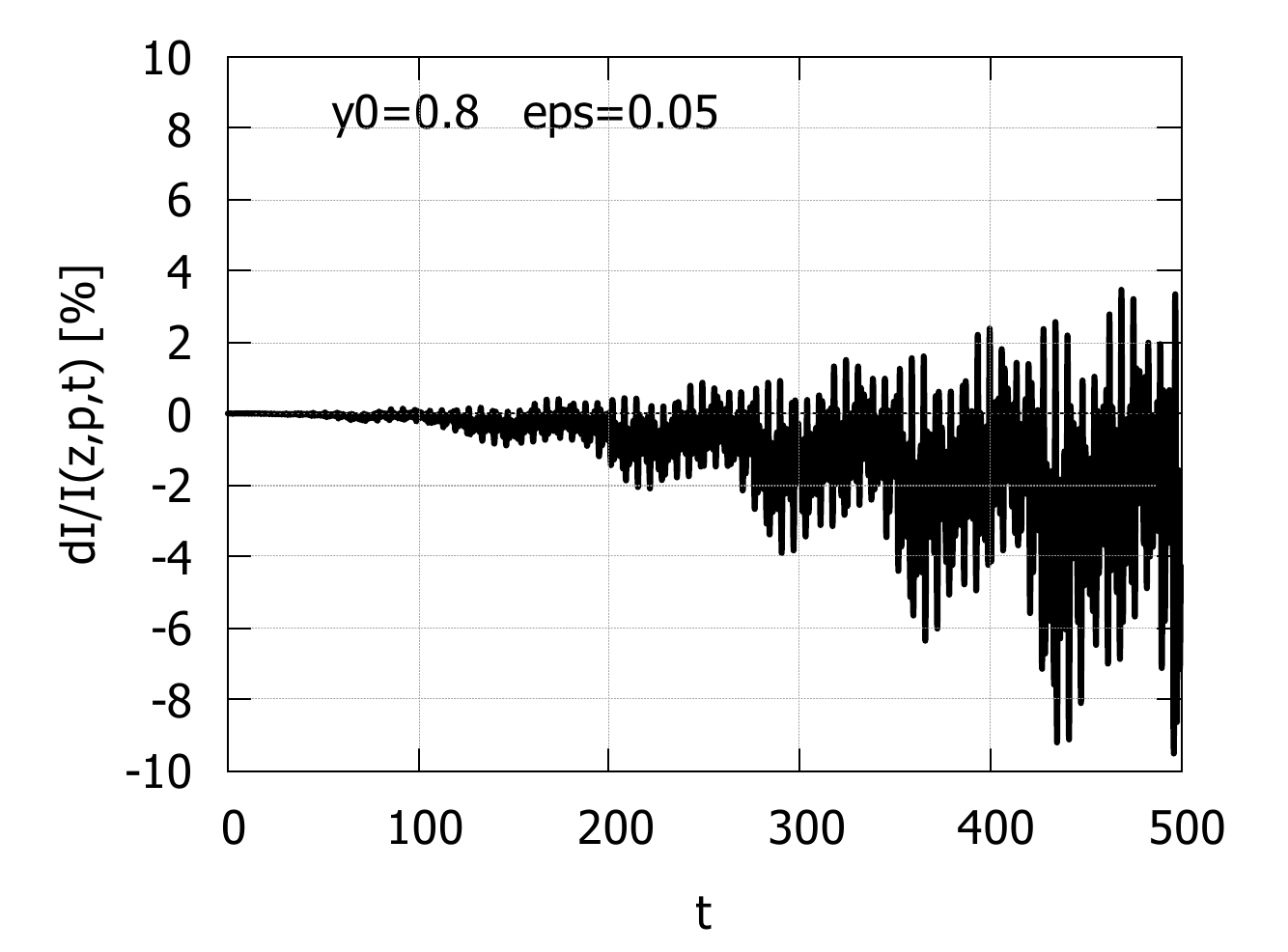}
		\caption{$y_0=0.8,\ \varepsilon=0.05$ — A strong variation of $9\%$ appears}
		\label{fig:tube-y0-0p5}
	\end{subfigure}
	
	\caption{Variations of the approximate analytical invariant $I(z,p,t)$ in percents of its initial value for $\varepsilon=0.05$ and $y_0=1.2 , 1.1 , 0.9 , 0.8$}
	\label{fig:tube-four-demos}
\end{figure}

\begin{figure}[H]
	\includegraphics[width=\textwidth]{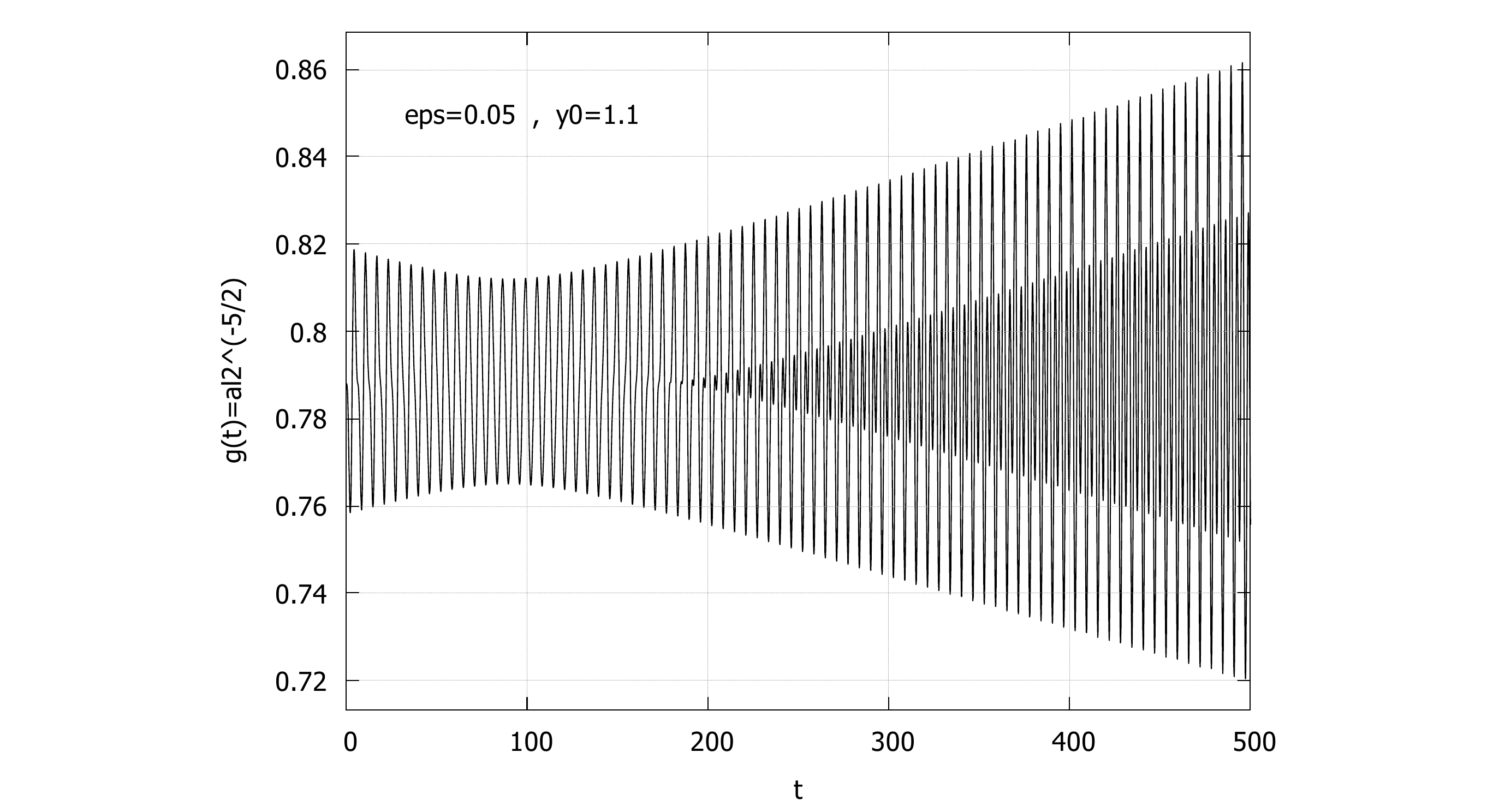}
	\caption{Graph of the coefficient function $g(t)$ when $\varepsilon=0.05$ and $y_0=1.1$}
\end{figure}

\section{Conclusions}
\label{sec:conclusions}

We have analyzed the nonlinear oscillator \eqref{osc_non_lin} with a time-dependent 
coefficient $g(t)$ and shown that it admits an exact quadratic invariant provided 
that $g(t)=\alpha_2(t)^{-5/2}$ and $\alpha_2(t)$ satisfies the nonlinear 
third-order equation derived in Section 2. This establishes a 
broad class of integrable non-autonomous systems.

A central finding is the qualitative distinction between two geometric regimes.
If $\alpha_2(t)$ is periodic, the invariant surfaces are compact and form
invariant tori, recovering the classical Liouville picture. However, we have
shown that periodic solutions of $\alpha_2(t)$ are generically obstructed by a
second-order resonance mechanism, which enforces a slow but unavoidable
aperiodic modulation. In this generic regime, the invariant surfaces become
non-compact: the motion remains strictly confined, but the trajectory winds
along a \emph{tube} rather than closing on a torus.

We have proposed the term \emph{tube integrability} to characterize this regime:
the system retains a global invariant and regular geometric structure, but the
associated invariant manifolds are non-compact in the time direction. This
extends the conceptual scope of integrability beyond the classical assumption
of compactness.

To understand the behavior of $\alpha_2(t)$ in the non-periodic setting, we
developed a perturbation expansion up to third order and verified its accuracy
against numerical integration. The effective small parameter was shown to be
$\varepsilon_{\mathrm{eff}} = \varepsilon\, y_0^{-7/2}$, explaining the transition
between quantitatively stable and unstable approximation regimes.

\medskip
\noindent\textbf{Crucially,} the breakdown of accuracy for $y_0 < 1$ does \emph{not}
indicate a loss of integrability. The invariant $I(z,p,t)$ remains exact for all $t$.
What breaks down is only the truncated asymptotic expansion of $\alpha_2(t)$,
whose coefficients are governed by nearby singularities in the complex 
$\varepsilon$–plane. Thus the observed drift of $I(z,p,t)$ in this regime reflects 
the natural limit of the perturbation series, not a transition to chaotic dynamics.

\medskip
In summary, this work identifies and characterizes a coherent form of
integrability beyond periodicity, in which the invariant structure is
non-compact but fully regular. This raises several natural directions for
future investigation, including the onset of tube-chaotic transitions,
Lyapunov analysis of the $\alpha_2(t)$ dynamics, and the classification of
non-compact invariant geometries in higher-dimensional systems.

\section*{Acknowledgments}

The author would like to thank Serge Bouquet (CEA, France) for many discussions 
on the special, torus-integrable case $\varepsilon=0$, which laid the foundation for the present 
analysis. I am also grateful to Christoph Lhotka (University of Rome) for 
many years of continuous scientific exchange, and to Gilbert Guignard (CERN) 
for introducing me to nonlinear particle dynamics and for numerous insights 
into the geometric interpretation of invariants. 

A special acknowledgment is due to my former mathematics teacher 
Heribert Hartmann, whose early guidance made it possible for me to pursue 
mathematics at all.

Finally, I would like to acknowledge the assistance provided by 
\textbf{OpenAI's GPT-5 model}, whose contributions in structuring, clarifying 
and refining the presentation were of significant help during the preparation 
of this work.

\end{document}